\newtheorem{theorem}{Theorem}[section]
\newtheorem{lemma}[theorem]{Lemma}
\newtheorem{remark}[theorem]{Remark}
\begin{document}

	\title{Characterizations of $p$-groups whose power graphs satisfy certain connectivity conditions}
	\author{Ramesh Prasad Panda\thanks{Department of Mathematics, School of Advanced Sciences, VIT-AP University, Amaravati, PIN-522237, Andhra Pradesh, India.} \thanks{Email address: rameshprpanda@gmail.com} }
	
	\date{}

	\maketitle
	
	\begin{abstract}
		Let $\Gamma$ be an undirected and simple graph. A set $ S $ of vertices in $\Gamma$ is called a {cyclic vertex cutset} of $\Gamma$ if $\Gamma - S$ is disconnected and has at least two components containing cycles. If $\Gamma$ has a cyclic vertex cutset, then it is said to be {cyclically separable}. The {cyclic vertex connectivity} of $\Gamma$ is the minimum of cardinalities of the cyclic vertex cutsets of $\Gamma$. The {power graph} $\mathcal{P}(G)$ of a group $G$ is the undirected and simple graph whose vertices are the elements $G$ and two vertices are adjacent if one of them is the power of other in $G$. In this paper, we first characterize the finite $ p $-groups ($p$ is a prime number) whose power graphs are cyclically separable in terms of their maximal cyclic subgroups. Then we characterize the finite $ p $-groups whose power graphs have equal vertex connectivity and cyclic vertex connectivity.

		\vskip .5cm
		
		\noindent {\bf Key words.} Cyclically separable graph, vertex connectivity, cyclic vertex connectivity, maximal cyclic subgroup, power graph, $p$-group  
		
		\smallskip
		\noindent {\bf AMS subject classification.} 05C25, 05C40, 20D15

	\end{abstract}

	\section{Introduction}
	
	All graphs considered in this paper are undirected and simple. Let $\Gamma$ be a graph. The {\it vertex connectivity} of $\Gamma$, denoted by $\kappa(\Gamma)$, is the minimum number of vertices whose deletion results in a disconnected or trivial subgraph of $\Gamma$. A \emph{vertex cutset} of $\Gamma$ is a set $ S $ of vertices in $\Gamma$ such that $\Gamma - S$ is disconnected. If $\Gamma$ is not a complete graph, then the vertex connectivity of $\Gamma$ is the minimum of cardinalities of the cutsets of $\Gamma$. 
	A \emph{cyclic vertex cutset} of $\Gamma$ is a vertex cutset $S$ of $\Gamma$ such that $\Gamma - S$ has at least two components containing cycles. If $\Gamma$ has a cyclic vertex cutset, then it is said to be {\em cyclically separable}. The \emph{cyclic vertex connectivity} $ c\kappa(\Gamma) $ is the minimum of cardinalities of the cyclic vertex cutsets of $\Gamma$.  If $\Gamma$ has no cyclic vertex cutset, $ c\kappa(\Gamma) $ is taken as infinity. The \emph{cyclic edge connectivity} is defined analogously by replacing vertex deletion with edge deletion. The notion of cyclic  connectivity of a graph first appeared in the famous incorrect conjecture of Tait in 1880, which was an attempt to prove the four color conjecture \cite{tait1880}. Birkhoff \cite{birkhoff1913} later reduced the four color conjecture from all planar graphs to a class of planar cubic graphs by making use of cyclic  connectivity. Other applications of this graph parameter include problems such as integer flow conjectures \cite{zhang1997} and measures of network reliability \cite{latifi1994}. Cyclic  connectivity of a graph has been studied in many other contexts, see \cite{liu2011,liu2022,nedela2022,robertson1984} and the references therein.
	
	The \emph{power graph} $\mathcal{P}(G)$ of a group $G$ is the graph whose vertices are the elements $G$ and two vertices are adjacent if one of them is the power of other in $G$. These graphs were introduced for groups and semigroups by Kelarev and Quinn \cite{kel-2000,kel-2002}. Power graphs are one of numerous other graphs defined on groups that have been studied extensively in literature (e.g., Cayley graphs \cite{Klotz}, commuting graphs \cite{bertram},  conjugacy class graphs \cite{Chen}, and enhanced power graphs \cite{aalipour,parveen}).
	In recent years, power graphs have been actively studied by researchers, resulting in many interesting results (see \cite{AKC,cur-2016,PPS-cyclic,rather,zahirovic2022}). In \cite{chattopadhyay2021,chattopadhyay2019,panda2018a}, vertex connectivity of power graphs of finite groups were studied. It was observed that in \cite{panda2018b} that the edge connectivity of the power graph of a finite group is equal to its minimum degree. Subsequently, in \cite{PPS-cyclic,panda2023}, the minimum degree of power graphs of finite groups were studied. In \cite{moghaddamfar2014,panda2018a}, some connectedness properties of power graphs of finite $p$-groups were investigated.
	
	With the introduction of power graphs, relationships between them and the underlying groups have been studied from different viewpoints \cite{AKC}. Moghaddamfar et al. \cite{moghaddamfar2014} characterized all finite groups whose power graphs are a bipartite graph, a planar graph, or a  strongly regular graph. Whereas, Ma and Feng \cite{xuanlong2015} classified all finite groups
	whose power graphs are uniquely colorable, split or unicyclic. 
	In \cite{panda2020}, Panda characterized the finite groups with minimally connected power graphs. Moreover, in \cite{panda2024}, Panda et al. gave a characterization of the finite nilpotent groups whose power graphs have equal  vertex connectivity and minimum degree. In this paper, we address two characterization problems associated with connectivies of power graphs of finite $p$-groups, where $p$ is a prime number. We first characterize the finite $ p $-groups whose power graphs are cyclically separable in terms of their maximal cyclic subgroups. Then we characterize the finite $ p $-groups whose power graphs have equal  vertex connectivity and cyclic  vertex connectivity.

	For any set $A$, let $|A|$ be its number of elements. Let $G$ be a group. Two subgroups $H_1$ and $H_2$ of $G$ are said to have \emph{trivial intersection} if $H_1 \cap H_2 = \{e\}$, where $e$ is the identity element of $G$. A cyclic subgroup of $G$ is called a {\it maximal cyclic subgroup} of $G$ if it is not properly contained in any other cyclic subgroup of $G$. Let $\mathcal{M}(G)$ be the set of all maximal cyclic subgroups of $G$. 
	For $M, N \in \mathcal{M}(G)$, we write $\text{d}(M,N) = \min\{|M {\setminus} N|, |N {\setminus} M|\} $. 
	The \emph{difference number} of $G$ is denoted by $\partial(G)$ and is defined as the maximum of $\text{d}(M,N)$ as $M$ and $N$ vary over every pair of subgroups in $\mathcal{M}(G)$. 
	For example, consider the elementary abelian group $\mathbb{Z}_2 \times \mathbb{Z}_2 \times \mathbb{Z}_2$. It has seven non-identity elements, namely $(0,0,1), (0,1,0), (0,1,1), (1,0,0), (1,0,1), (1,1,0)$, and $(1,1,1)$. Each of these elements generates a maximal cyclic subgroup of order $2$. Hence $\partial(\mathbb{Z}_2 \times \mathbb{Z}_2 \times \mathbb{Z}_2) = 1$. Whereas, the quaternion group $Q_8 = \langle a, b \mid a^4 = e, a^2 = b^2, ab = ba^{-1} \rangle$ has exactly three maximal cyclic subgroups; they are $\langle a \rangle$, $\langle ab \rangle$ and $\langle a^2b \rangle$. Since each of them has order $4$ and the intersection of every pair of them is $\{e, a^2\}$, we get $\partial(Q_{8}) = 2$.
	
	Throughout the paper, $p$ is a prime number. Let $G$ be a group.  We observe that $\partial(G) \geq 1$ if and only if $G$ is non-cyclic. Whereas, $\partial(G) \geq 2$ if and only if $G$ has no maximal subgroup of order $2$. We prove in the following theorem that for any finite \break $p$-group $G$, the conditions that $\mathcal{P}(G)$ is cyclically separable and the inequality $\partial(G) \geq 3$ are equivalent. In fact, we obtain a characterization of these conditions in terms of the values of $p$ and the maximal cyclic subgroups of $G$.
	
	\newpage
	
	\begin{theorem}\label{mainthm}
		For any finite $p$-group $G$, the following statements are equivalent:
		\begin{enumerate}[\rm(i)]
			\item $\mathcal{P}(G)$ is cyclically separable,
			\item $G$ satisfies the inequality $\partial(G) \geq 3$,
			\item $G$ satisfies one of the following conditions:
			\begin{enumerate}[\rm(a)]
				\item $p>3$ and $G$ is non-cyclic,
				\item $p=3$ and $G$ has at least two maximal cyclic subgroups of order greater than $3$,
				\item $p=2$ and $G$ has at least two maximal cyclic subgroups of order greater than $4$, or that $G$ has at least two maximal cyclic subgroups of order greater than $2$ with trivial intersection.
			\end{enumerate}
		\end{enumerate}
	\end{theorem}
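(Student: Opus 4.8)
My plan is to prove the two equivalences (i)$\Leftrightarrow$(ii) and (ii)$\Leftrightarrow$(iii) separately: the first is graph-theoretic, the second a group-theoretic computation. Two structural facts underpin the graph-theoretic part. First, since a cyclic $p$-group has a complete power graph, each maximal cyclic subgroup $M$ induces a clique on $|M|$ vertices of $\mathcal{P}(G)$; hence any $r$ distinct elements of $M$ span a $K_r$, which contains a cycle as soon as $r\geq 3$. Second — and this is the key lemma I would isolate first — every cycle of $\mathcal{P}(G)$ contains a triangle, and every triangle lies inside a single maximal cyclic subgroup. For the triangle-containment I would take three mutually adjacent vertices $x,y,z$: adjacency means $\langle x\rangle,\langle y\rangle,\langle z\rangle$ are pairwise comparable under inclusion, pairwise comparable subgroups form a chain, so all three elements lie in the largest of them and hence in a maximal cyclic subgroup. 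To reduce an arbitrary cycle to a triangle I would take a shortest cycle $v_0v_1\cdots v_{k-1}$ (necessarily chordless); if $k\geq 4$, then $\langle v_0\rangle,\dots,\langle v_{k-1}\rangle$ form a cyclic sequence with strictly comparable consecutive terms, so some $\langle v_i\rangle$ is a local maximum with $\langle v_{i-1}\rangle,\langle v_{i+1}\rangle\subsetneq\langle v_i\rangle$; being subgroups of the cyclic $p$-group $\langle v_i\rangle$ they are comparable, so $v_{i-1}v_{i+1}$ is a chord, a contradiction.

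For (ii)$\Rightarrow$(i) I would take $M,N\in\mathcal{M}(G)$ realizing $\partial(G)\geq 3$, so $|M\setminus N|\geq 3$ and $|N\setminus M|\geq 3$, and set $S=G\setminus(M\triangle N)$. What remains is exactly $(M\setminus N)\sqcup(N\setminus M)$: each block is a clique on at least three vertices, hence a cycle-bearing component, and there are no edges between the blocks, since an edge from $x\in M\setminus N$ to $y\in N\setminus M$ would force $x\in\langle y\rangle\subseteq N$ or $y\in\langle x\rangle\subseteq M$, either way a contradiction. Thus $S$ is a cyclic vertex cutset. For the converse I would argue $\neg$(ii)$\Rightarrow\neg$(i): assuming $\partial(G)\leq 2$, suppose $\mathcal{P}(G)-S$ had two cycle-bearing components $C_1,C_2$. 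By the key lemma each $C_i$ contains three elements of some maximal cyclic subgroup $M_i$, and $M_1\neq M_2$, for otherwise these surviving elements would lie in a common clique and so in one component. Since $\mathrm{d}(M_1,M_2)\leq 2$, one difference, say $M_1\setminus M_2$, has at most two elements, so among the three surviving elements of $M_1$ in $C_1$ at least one lies in $M_2$; being in the clique $M_2$, it is adjacent to the surviving elements of $M_2$ in $C_2$, contradicting that $C_1,C_2$ are distinct components.

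For (ii)$\Leftrightarrow$(iii) I would reduce everything to one numerical formula. For distinct $M,N\in\mathcal{M}(G)$ with $|M|=p^a$, $|N|=p^b$ and $|M\cap N|=p^c$, maximality forces $c<\min\{a,b\}$, and a short computation gives $\mathrm{d}(M,N)=p^{\min\{a,b\}}-p^c$. Writing $m=\min\{a,b\}$ so that $\mathrm{d}(M,N)=p^m-p^c$ with $0\leq c\leq m-1$, I would then check $\partial(G)\geq 3$ case by case. For $p\geq 5$ every pair already satisfies $p^m-p^c\geq p-1\geq 4$, so the condition reduces to non-cyclicity, matching (a). For $p=3$ one has $p^m-p^c\geq 3$ exactly when $m\geq 2$, i.e.\ both subgroups have order exceeding $3$, matching (b). For $p=2$ one has $2^m-2^c\geq 3$ exactly when either $m\geq 3$ (both orders exceed $4$) or $m=2$ with $c=0$ (both orders exceed $2$ and the intersection is trivial), which are precisely the two alternatives of (c).

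The hardest part is the converse direction (i)$\Rightarrow$(ii). The construction in (ii)$\Rightarrow$(i) and the arithmetic in (ii)$\Leftrightarrow$(iii) are essentially forced once the right quantities are in place, but ruling out \emph{every} cyclic vertex cutset when $\partial(G)\leq 2$ rests entirely on the structural lemma that a cycle-bearing component must concentrate three of its vertices inside a single maximal cyclic subgroup. Getting that lemma right — in particular the reduction of an arbitrary cycle to a triangle via the local-maximum argument, together with the fact that two comparable subgroups of a cyclic $p$-group are nested — is where the real work lies; once it is in hand, the pigeonhole step producing the contradiction is short.
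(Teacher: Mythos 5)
Your proposal is correct, and it reaches the theorem by a genuinely different route for the hard direction. The paper closes the implication cycle (ii) $\Rightarrow$ (i) $\Rightarrow$ (iii) $\Rightarrow$ (ii): its (ii) $\Rightarrow$ (i) is essentially your cutset construction $S = G \setminus (M \bigtriangleup N)$ (stated for all finite groups, hence with extra work when $M$ does not induce a clique, which your $p$-group-only version rightly skips); its (iii) $\Rightarrow$ (ii) is a totient-style count that your exact formula $\text{d}(M,N) = p^{\min\{a,b\}} - p^{c}$, $c < \min\{a,b\}$, subsumes; and its (i) $\Rightarrow$ (iii) is proved directly by exhibiting explicit join/disjoint-union decompositions of $\mathcal{P}(G)$ in each complementary case --- e.g.\ $\mathcal{P}(G^*) \cong K_{|M|-1} + sK_2$ for $p=3$ with at most one large maximal cyclic subgroup, and, in the hardest $p=2$ subcase, a careful analysis of the configuration in which all maximal cyclic subgroups of order greater than $2$ pairwise intersect in the unique involution $\langle x \rangle$, showing no vertex deletion can ever produce two cycle-bearing components. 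You replace all of that case analysis by the contrapositive (i) $\Rightarrow$ (ii), resting on one reusable structural lemma: the power graph of a finite $p$-group is chordal, since your local-maximum argument correctly exploits that the subgroup lattice of a cyclic $p$-group is a chain; hence every cycle-bearing component of $\mathcal{P}(G) - S$ contains a triangle lying inside a single maximal cyclic subgroup, and the pigeonhole with $\text{d}(M_1,M_2) \leq 2$ forces an edge between the two supposed components. Your arithmetic then gives (ii) $\Leftrightarrow$ (iii) uniformly in $p$, whereas the paper gets (ii) $\Rightarrow$ (iii) only implicitly through the cycle of implications. Each approach buys something: the paper's yields explicit descriptions of the borderline power graphs (and a (ii) $\Rightarrow$ (i) lemma valid for arbitrary finite groups, as its $D_{40}$ example exploits), while yours is shorter, uniform in $p$, and isolates a lemma of independent use. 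One cosmetic slip: consecutive vertices on your chordless cycle need not generate \emph{strictly} comparable subgroups (they may generate the same cyclic subgroup), but your local-maximum step only uses non-strict containment, so nothing breaks.
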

	
	For any graph $\Gamma$, we have $ \kappa(\Gamma) \leq c\kappa(\Gamma)$. Thus the question arises about the equality of these two connectivity parameters. In the following theorem, we address this for the power graph of a finite $p$-group.
	
	\begin{theorem}\label{mainthm2}
		Let $G$ be a finite $p$-group. Then $ \kappa(\mathcal{P}(G)) = c\kappa(\mathcal{P}(G)) $ if and only if $G$ satisfies one of the following conditions:
		\begin{enumerate}[\rm(i)]
			\item $p>3$ and $G$ is non-cyclic,
			\item $p \in \{2,3\}$ and $G$ has at least two maximal cyclic subgroups of order greater than $p$ with trivial intersection.
		\end{enumerate}
	\end{theorem}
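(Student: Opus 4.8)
The plan is to reduce the equality to a property of one distinguished cutset. Since $\kappa(\mathcal{P}(G)) \le c\kappa(\mathcal{P}(G))$ always, and $c\kappa(\mathcal{P}(G)) = \infty$ whenever $\mathcal{P}(G)$ is not cyclically separable, any group attaining equality must be cyclically separable; by Theorem~\ref{mainthm} I may therefore assume $G$ is non-cyclic with $\partial(G) \ge 3$. Let $D = \bigcap_{M \in \mathcal{M}(G)} M$. I would first check that the universal vertices of $\mathcal{P}(G)$ are exactly the elements of $D$ (an element $g$ is adjacent to all others iff $\langle g \rangle$ lies in every maximal cyclic subgroup), so $D$ is contained in every vertex cutset. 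A separate point is that $D = \{e\}$ here: for odd $p$ a nontrivial $D$ would contain a subgroup of order $p$ common to all maximal cyclic subgroups, hence a \emph{unique} subgroup of order $p$, forcing $G$ cyclic; and for $p = 2$ the only non-cyclic possibility with $|D| > 1$ is the generalized quaternion groups, which are not cyclically separable and so are already excluded. Thus every cutset contains $e$, giving $\kappa(\mathcal{P}(G)) \ge 1$.

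Next I would show $\mathcal{P}(G) - \{e\}$ is disconnected, so that $\kappa(\mathcal{P}(G)) = 1$ and, since every cutset contains $e$, the singleton $\{e\}$ is the \emph{unique} minimum cutset. The components of $\mathcal{P}(G) - \{e\}$ are governed by the subgroups of order $p$: using that a generator of a maximal cyclic subgroup $M$ has closed neighbourhood exactly $M$, and that for distinct $M, N \in \mathcal{M}(G)$ the intersection $M \cap N$ is a subgroup of the cyclic group $M$ (hence, by the chain structure of cyclic $p$-groups, comparable to its subgroup of order $p$), one gets that $M$ and $N$ lie in the same component iff $M \cap N \ne \{e\}$ iff $M$ and $N$ share their subgroup of order $p$. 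Since no subgroup of order $p$ can be common to \emph{all} of $\mathcal{M}(G)$, there are at least two such components. Consequently $\kappa(\mathcal{P}(G)) = c\kappa(\mathcal{P}(G))$ holds iff $\{e\}$ is itself a cyclic vertex cutset, i.e. iff at least two components of $\mathcal{P}(G) - \{e\}$ contain cycles.

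To finish I would identify the cycle-bearing components. Grouping $\mathcal{M}(G)$ by the common subgroup of order $p$, each component is a union of cliques $M \setminus \{e\}$ of size $|M| - 1$; as every triangle of $\mathcal{P}(G)$ lies inside a single cyclic subgroup, such a component contains a cycle iff it contains a maximal cyclic subgroup $M$ with $|M| - 1 \ge 3$, i.e. $|M| \ge 4$. If $p > 3$, then every maximal cyclic subgroup has order at least $p \ge 5 > 4$, so all (at least two) components contain cycles and equality holds, which is case (i). If $p \in \{2,3\}$, the inequality $|M| \ge 4$ is equivalent to $|M| > p$, and two distinct cycle-bearing components amount to two maximal cyclic subgroups $M_1, M_2$ of order greater than $p$ having different subgroups of order $p$; by the chain observation this is exactly $M_1 \cap M_2 = \{e\}$, which is case (ii). Matching the two chains of equivalences gives both directions.

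I expect the main obstacle to be the structural work of the first two paragraphs: pinning down the universal vertices as $\bigcap_{M} M = \{e\}$, proving $\mathcal{P}(G) - \{e\}$ disconnected, and deducing that $\{e\}$ is the unique minimum cutset. This rests on the chain structure of subgroups of cyclic $p$-groups together with the classification of $p$-groups having a unique subgroup of order $p$; once it is established, locating the cycle-bearing components and translating ``distinct subgroups of order $p$'' into ``trivial intersection'' is routine bookkeeping.
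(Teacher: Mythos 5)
Your proposal is sound in substance and arrives at the characterization by the same underlying reduction as the paper: equality holds iff $\kappa(\mathcal{P}(G)) = 1$ and $\{e\}$ is itself a cyclic vertex cutset, i.e.\ iff $\mathcal{P}(G^*)$ has at least two cycle-bearing components, which you then translate into two maximal cyclic subgroups of order greater than $p$ with trivial intersection via the observation that $M \cap N \neq \{e\}$ iff $M$ and $N$ share their unique subgroup of order $p$. The genuine difference is organizational and in what is proved versus cited. The paper invokes Lemma \ref{lemmapgroup} to get disconnectedness of $\mathcal{P}(G^*)$ once cyclic and generalized quaternion groups are excluded, invokes Lemma \ref{lemmapgroup2} in the converse direction to place $M_1^*$ and $M_2^*$ in different components, and argues the forward direction by contradiction (a common order-$p$ subgroup of all the $M_i$ of order greater than $p$ would make $\mathcal{P}(\cup_{i=1}^r M_i^*)$ connected and leave only one cycle-bearing component, after showing $G \neq \cup_{i=1}^r M_i$). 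You instead re-derive the whole component structure from scratch — components of $\mathcal{P}(G^*)$ are indexed by the subgroups of order $p$, and the universal vertices are exactly $D = \bigcap_{M \in \mathcal{M}(G)} M$, trivial outside the cyclic and generalized quaternion cases by Lemma \ref{pgroup} — which lets you run both directions of the theorem through a single chain of equivalences and avoid the paper's contradiction argument. What this buys is a self-contained, structurally transparent proof; what it costs is reproving facts the paper gets by citation.

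Two steps need tightening before the proof is complete. First, you assert without verification that generalized quaternion groups are not cyclically separable; this is true but requires the computation the paper performs: in $G = \langle a,b : a^{2n} = e,\ a^n = b^2,\ ab = ba^{-1} \rangle$ the maximal cyclic subgroups are $\langle a \rangle$ of order $2n$ and the $n$ subgroups $\langle a^k b \rangle$ of order $4$, every pairwise intersection contains $a^n$, so $G$ has at most one maximal cyclic subgroup of order greater than $4$ and no pair with trivial intersection, whence neither clause of condition (c) in Theorem \ref{mainthm} holds. Second, your criterion that a component contains a cycle iff it contains some $M$ with $|M| \geq 4$ is justified only for triangles, and a cycle need not contain a triangle in an arbitrary graph; here the inference is correct but needs an argument. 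Either note that a maximal cyclic subgroup of order $p$ is necessarily its own component (a clique $K_{p-1}$, acyclic when $p \leq 3$), so any other component contains some $M$ with $|M| \geq p^2 \geq 4$ and hence a triangle; or observe that on any cycle in $\mathcal{P}(G^*)$ the vertex $x$ with $\langle x \rangle$ inclusion-maximal has both of its cycle-neighbours inside the cyclic $p$-group $\langle x \rangle$, whose subgroups form a chain, so those neighbours are adjacent and every shortest cycle is a triangle. With these two patches your argument goes through.
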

	
	The \emph{enhanced power graph} of a group $G$ is the undirected and simple graph whose vertices are the elements $G$ and two vertices are adjacent if they belong to the same cyclic subgroup of $G$. Enhanced power graphs were introduced in \cite{aalipour} and have been studied actively since then (see the survey \cite{ma2022survey} and the reference therein).  The power graph of $G$ is a spanning subgraph of its enhanced power graph. It was shown in \cite{aalipour} that for any finite group $G$, its power graph and enhanced power graph are equal if and only if  every cyclic subgroup of $ G $ has prime power order. In view of this and the above theorems, we state the following remark.
	
	\begin{remark}
		For any finite $p$-group $G$, Theorems \ref{mainthm} and \ref{mainthm2} hold if the power graph of $G$ is replaced by the enhanced power graph of $G$.
	\end{remark}

	We conclude the introduction by fixing some necessary notations for  groups and graphs.
	We always denote by $e$ the identity element of the group under consideration.
	Let $G$ be a group. For any $A \subseteq G$, we write $A^* = A {\setminus} \{e\}$. We use the notation $\mathcal{P}(A)$ for the subgraph of $ \mathcal{P}(G) $ induced by $A$. Also, we simply write $\mathcal{P}(e)$ instead of $\mathcal{P}(\{e\})$. For any cyclic subgroup $\langle x \rangle$ generated by $x \in G$, we write $[x] = \{y \in G : \langle y \rangle = \langle x \rangle \}$. If $G$ is a cyclic group, we denote by $\widetilde{G}$, the set of generators of $G$. That is, if $G = \langle x \rangle$, then $\widetilde{G} = [x]$.

	If $\Gamma_1, \Gamma_2, \dots \Gamma_r$ are pairwise disjoint graphs, we refer to their union as a \emph{disjoint union}, and denote it by $\Gamma_1 + \Gamma_2 + \dots + \Gamma_r$. Whereas, if $\Gamma_1 = \Gamma_2 = \dots = \Gamma_r = \Gamma$, we denote the above disjoint union by $n\Gamma$. Let $\Gamma_1$ and $\Gamma_2$ be two graphs.  We write $\Gamma_1 \cong \Gamma_2$ if the two graphs are isomorphic.
	The \emph{join} $\Gamma_1 \vee \Gamma_2$ of $\Gamma_1$ and $\Gamma_2$ is the graph obtained by taking $\Gamma_1 + \Gamma_2$ and then adding edges $\{v_1,v_2\}$ for all vertices $v_1$ and $v_2$ in $\Gamma_1 $ and $ \Gamma_2$, respectively.
	
	\section{Graph connectivities}
	
	For any finite group $G$, the identity vertex is adjacent to all other vertices of $\mathcal{P}(G)$. So $\mathcal{P}(G)$ is a connected graph. The following lemma talks about the completeness of $\mathcal{P}(G)$. 
	
	\begin{lemma}[\cite{CGS-2009}] \label{lemma}
		For any finite group $G$, $\mathcal{P}(G)$ is a complete graph if and only if $G$ is a cyclic group of prime power order.
	\end{lemma}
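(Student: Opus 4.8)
The plan is to prove the two implications separately, working directly from the adjacency rule that $x$ and $y$ are joined in $\mathcal{P}(G)$ exactly when one lies in the cyclic subgroup generated by the other.

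For the sufficiency, suppose $G = \langle g \rangle$ is cyclic with $|G| = p^n$. The key fact is that the subgroups of a cyclic group of prime power order are totally ordered by inclusion: since the divisors of $p^n$ are precisely $1, p, \dots, p^n$, there is a unique subgroup of each such order, and they form the chain $\{e\} \subset \langle g^{p^{n-1}} \rangle \subset \cdots \subset \langle g \rangle = G$. Consequently, for any $x, y \in G$ the cyclic subgroups $\langle x \rangle$ and $\langle y \rangle$ are comparable, say $\langle x \rangle \subseteq \langle y \rangle$; then $x \in \langle y \rangle$, so $x$ is a power of $y$ and the two vertices are adjacent. Hence $\mathcal{P}(G)$ is complete.

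For the necessity, assume $\mathcal{P}(G)$ is complete. First I would show that $G$ is a $p$-group. If not, then $|G|$ has two distinct prime divisors $p$ and $q$, and by Cauchy's theorem $G$ contains an element $x$ of order $p$ and an element $y$ of order $q$. Adjacency of $x$ and $y$ forces one to be a power of the other, hence $|x|$ divides $|y|$ or vice versa, that is $p \mid q$ or $q \mid p$, which is impossible for distinct primes. So $G$ is a $p$-group. Next, let $g$ be an element of maximal order in $G$. For any $h \in G$, completeness makes $g$ and $h$ adjacent, so either $h \in \langle g \rangle$, or $g \in \langle h \rangle$; in the latter case $\langle g \rangle \subseteq \langle h \rangle$ gives $|g| \le |h|$, and maximality forces $\langle g \rangle = \langle h \rangle$, so again $h \in \langle g \rangle$. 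Therefore $G = \langle g \rangle$ is cyclic of prime power order.

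The argument is entirely elementary; the only point requiring care is the necessity direction, where one must rule out more than one prime before pinning down cyclicity. The essential idea in both places is that the power-graph edge relation refines divisibility of orders, so completeness forces the orders, and hence the generating cyclic subgroups, to be linearly comparable — exactly the structure enjoyed by a cyclic $p$-group and by no other finite group.
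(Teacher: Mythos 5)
Your proof is correct; note that the paper itself gives no proof of this lemma, quoting it from the cited reference \cite{CGS-2009}, and your argument (total ordering of subgroups of a cyclic $p$-group for sufficiency; Cauchy's theorem plus an element of maximal order for necessity) is essentially the standard one found there. The only cosmetic point is the trivial group, whose power graph $K_1$ is complete and which one must regard, by convention, as cyclic of prime power order $p^0$.
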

	
	The next lemma is about the adjacency of vertices belonging to different subgroups in a power graph.
	
	\begin{lemma}
		\label{lem_adj}
		For any group $G$ with subgroups $H$ and $K$, if $x \in H {\setminus} K$ and $y \in K {\setminus} H$, then $x$ and $y$ are not adjacent in $\mathcal{P}(G)$. 
	\end{lemma}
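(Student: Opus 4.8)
The plan is to argue by contradiction, working straight from the definition of adjacency in the power graph. The one fact I would use is elementary: a subgroup is closed under taking powers, so $\langle z \rangle \subseteq H$ whenever $z \in H$ and $H$ is a subgroup of $G$.

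First I would suppose, toward a contradiction, that $x$ and $y$ are adjacent in $\mathcal{P}(G)$. By the definition of the power graph this means that one of the two elements is a power of the other, which splits into two cases: either $y = x^m$ for some integer $m$, or $x = y^n$ for some integer $n$. In the first case, since $x \in H$ and $H$ is a subgroup, we have $y = x^m \in \langle x \rangle \subseteq H$, which contradicts the hypothesis $y \in K \setminus H$ (so $y \notin H$). In the second case, symmetrically, since $y \in K$ and $K$ is a subgroup, we get $x = y^n \in \langle y \rangle \subseteq K$, contradicting $x \in H \setminus K$. Either way the assumption leads to a contradiction, so $x$ and $y$ are non-adjacent.

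I do not anticipate any genuine obstacle: the statement follows immediately from closure of subgroups under powers. The only point requiring a little care is to treat both directions of the (symmetric) adjacency relation and to pair each direction with the correct containment, but once the two cases are set up as above the argument closes at once.
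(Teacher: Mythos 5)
Your proof is correct and is precisely the elementary closure-under-powers argument the paper implicitly relies on: the paper states this lemma without proof, treating it as immediate. Both cases of the adjacency relation are handled properly, so nothing is missing.
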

	
	In particular, in Lemma \ref{lem_adj}, if the intersection of $H$ and $K$ is trivial, and $x \in H^*$ and $y \in K^*$, then $x$ and $y$ are not adjacent in $\mathcal{P}(G)$. We will frequently use the above lemmas without referring to them explicitly.
	
	We first prove that for any finite group $G$, the cyclic separability of $\mathcal{P}(G)$ is necessary for $\partial(G) \geq 3$ to hold. 
	
	\begin{lemma}\label{lemma1}
		For any finite group $G$, if $\partial(G) \geq 3$ holds, then $\mathcal{P}(G)$ is cyclically separable.
	\end{lemma}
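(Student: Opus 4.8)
The plan is to build an explicit cyclic vertex cutset from the two maximal cyclic subgroups that witness $\partial(G) \geq 3$. First I would fix $M, N \in \mathcal{M}(G)$ with $\text{d}(M,N) = \partial(G) \geq 3$, so that both $|M \setminus N| \geq 3$ and $|N \setminus M| \geq 3$. I propose the cutset $S = G \setminus \big((M \setminus N) \cup (N \setminus M)\big)$, which contains $e$ since $e \in M \cap N$. After deleting $S$, the surviving vertices are exactly $(M \setminus N) \cup (N \setminus M)$, and Lemma \ref{lem_adj} (applied with the subgroups $M$ and $N$) shows that no edge of $\mathcal{P}(G)$ joins a vertex of $M \setminus N$ to a vertex of $N \setminus M$. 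Hence $\mathcal{P}(G) - S$ is the disjoint union of $\mathcal{P}(M \setminus N)$ and $\mathcal{P}(N \setminus M)$, which is disconnected because both pieces are nonempty.

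The heart of the argument is then to show that each piece contains a cycle; by symmetry I treat $M \setminus N$. Since $|M \setminus N| \geq 3 > 0$, the intersection $H := M \cap N$ is a proper subgroup of $M$, and $M \setminus N = M \setminus H$ with $|M \setminus H| \geq 3$; in particular $|M| \geq 4$. I would now locate an explicit triangle. Because $|M| \geq 4$, the cyclic group $M$ has at least two distinct generators $g_1, g_2$, and each has order $|M|$, which cannot divide $|H| < |M|$, so $g_1, g_2 \in M \setminus H$. The two generators are adjacent, and each is adjacent to every element of $M$ (every element being a power of a generator); choosing any third vertex $w \in (M \setminus H) \setminus \{g_1, g_2\}$, available since $|M \setminus H| \geq 3$, produces a triangle $\{g_1, g_2, w\}$ inside $\mathcal{P}(M \setminus N)$. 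The same reasoning yields a triangle inside $\mathcal{P}(N \setminus M)$.

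Finally I would assemble the pieces: the triangle in $\mathcal{P}(M \setminus N)$ lies in a connected component $C_1$ of $\mathcal{P}(G) - S$ with $C_1 \subseteq M \setminus N$ (no edges leave $M \setminus N$), and likewise the triangle in $\mathcal{P}(N \setminus M)$ lies in a component $C_2 \subseteq N \setminus M$. Since $M \setminus N$ and $N \setminus M$ are disjoint, $C_1 \neq C_2$, so $\mathcal{P}(G) - S$ has at least two components containing cycles. Thus $S$ is a cyclic vertex cutset and $\mathcal{P}(G)$ is cyclically separable.

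The only step that requires care is the cycle-existence claim, since $\mathcal{P}(M)$ need not be complete when $|M|$ is not a prime power; the generator argument above avoids any case analysis on small orders by using only that a cyclic group of order at least $4$ has two generators and that generators dominate the whole cyclic subgroup in the power graph. Everything else is routine bookkeeping with Lemma \ref{lem_adj}, and I note that maximality of $M$ and $N$ enters only through the definition of $\partial(G)$, not in the construction itself.
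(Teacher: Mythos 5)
Your proof is correct and essentially matches the paper's: both take the cutset $S = G \setminus \bigl((M\setminus N)\cup(N\setminus M)\bigr)$ for a pair $M,N$ witnessing $\partial(G)\geq 3$, invoke Lemma \ref{lem_adj} to separate the two set differences, and exhibit a triangle consisting of two generators of $M$ together with a third surviving vertex. The only deviation is that you handle the cycle-existence step uniformly, using that a cyclic group of order at least $3$ has two distinct generators (e.g.\ $g$ and $g^{-1}$), whereas the paper splits into cases --- when $|M|$ is a prime power, $M\setminus N$ is a clique, and otherwise it extracts a second generator from $|[x]|\geq\phi(pq)\geq 2$ --- so your version is a mild streamlining of the same argument rather than a different method.
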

	
	\begin{proof}
		As $\partial(G) \geq 3$, $G$ has two maximal cyclic subgroups $M$ and $N$ such that $|M{\setminus} N| \geq 3$ and $|N{\setminus} M| \geq 3$. We notice that the subgraph of $\mathcal{P}(G)$ induced by $M \bigtriangleup N = (M{\setminus} N) \cup (N{\setminus} M)$ is a disconnected  graph.
		
		If $M$ is of prime power order, then $M {\setminus} N$ is a clique in $\mathcal{P}(G)$. Additionally, as $|M{\setminus} N| \geq 3$, the subgraph of $\mathcal{P}(G)$ induced by $M {\setminus} N$ contains a cycle. Next, suppose that $M$ is not of prime power order. Thus $pq \mid |M|$ for some distinct primes $p$ and $q$. Let $x$ be a generator of $M$. Then $|[x]| \geq \phi(pq)\geq (2-1)(3-1)=2$. So there exists $y \in [x] \setminus \{x\}$. We have $[x] \subset M {{\setminus}} N$, and since $|M {\setminus} N | \geq 3$, there exists $z \in (M {{\setminus}} N) {{\setminus}} \{x, y\}$. As $x$ and $y$ are generators of $M$, the subgraph of $\mathcal{P}(G)$ induced by $M {{\setminus}} N$ contains a cycle induced by $\{x,y,z\}$.
		By analogous argument, the subgraph of $\mathcal{P}(G)$ induced by $N {{\setminus}} M$ also contains a cycle. Hence $G \setminus (M \bigtriangleup N)$ is a cyclic vertex cutset of $\mathcal{P}(G)$. Therefore $\mathcal{P}(G)$ is cyclically separable.
	\end{proof}
	
	The converse of the above lemma is not necessarily true for an arbitrary finite group. For example, consider the dihedral group, $D_{40} =
	\langle a, b \mid a^{20} = b^2 = e, ab = ba^{-1} \rangle$, of order $40$. 
	We have $\langle a^{5} \rangle \cap \langle a^4 \rangle = \{e\}$. 
	So the subgraph of $\mathcal{P}({{D}_{40}})$ induced by $\langle a^{5} \rangle^* \cup \langle a^4 \rangle^*$ is a disconnected graph with two components induced by $\langle a^{5} \rangle^*$ and $\langle a^4 \rangle^*$. 
	The orders of $\langle a^{5} \rangle$ and $\langle a^4 \rangle$ are $4$ and $5$, respectively. Then $\langle a^{5} \rangle^*$ and $\langle a^4 \rangle^*$ are cliques of size at least three in $\mathcal{P}({{D}_{40}})$, and so the subgraphs induced by them contain cycles. Thus ${D}_{40} \setminus (\langle a^{5} \rangle^* \cup \langle a^4 \rangle^*)$ is a cyclic vertex cutset of $\mathcal{P}({{D}_{40}})$. As a result, $\mathcal{P}({{D}_{40}})$ is cyclically separable. Whereas, the maximal cyclic subgroups of $D_{40}$ are $\langle a \rangle$, $\langle b \rangle$, $\langle ab \rangle$, $\dots$, $\langle a^{19}b \rangle$, where $|\langle a \rangle| = 20$ and $|\langle a^ib \rangle| = 2$ for $0 \leq i \leq 19$. Then $\text{d}(\langle a \rangle, \langle a^ib \rangle) = \min\{19,1\} = 1$ for $0 \leq i \leq 19$, and that $\text{d}(\langle a^ib \rangle, \langle a^jb \rangle) = \min\{1,1\} = 1$ for $0 \leq i, j \leq 19$, $i \neq j$. So we get $\partial(D_{40}) = 1$.

	For any finite $p$-group $G$, we now show that the conditions in (iii) of Theorem \ref{mainthm} are necessary for  $\partial(G) \geq 3$ to hold.
	
	\break
	
	\begin{lemma}\label{lemma2}
		For any finite $p$-group $G$, $\partial(G) \geq 3$ holds if $G$ satisfies one of the following conditions:
		\begin{enumerate}[\rm(a)]
			\item $p>3$ and $G$ is non-cyclic,
			\item $p=3$ and $G$ has at least two maximal cyclic subgroups of order greater than $3$,
			\item $p=2$ and $G$ has at least two maximal cyclic subgroups of order greater than $4$, or that $G$ has at least two maximal cyclic subgroups of order greater than $2$ with trivial intersection.
		\end{enumerate}
	\end{lemma}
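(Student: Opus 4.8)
The plan is to exploit the fact that in a $p$-group two distinct maximal cyclic subgroups meet in a subgroup that is proper in each, and then to turn this into a size bound on the two halves of their symmetric difference. Concretely, let $M$ and $N$ be distinct maximal cyclic subgroups, say $|M| = p^a$ and $|N| = p^b$. Since $M$ is maximal cyclic and $N$ is cyclic, $M \not\subseteq N$, so $M \cap N$ is a proper subgroup of the cyclic $p$-group $M$; as the subgroups of $M$ are totally ordered by inclusion with largest proper one of index $p$, we get $|M \cap N| \leq p^{a-1}$. Hence $|M \setminus N| = |M| - |M \cap N| \geq p^a - p^{a-1} = p^{a-1}(p-1)$, and symmetrically $|N \setminus M| \geq p^{b-1}(p-1)$. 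This yields the uniform estimate $\text{d}(M,N) \geq (p-1)\,p^{\min\{a,b\}-1}$, from which most cases follow by choosing $M$ and $N$ appropriately.

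I would then verify each of (a)--(c) against this estimate. In (a), since $G$ is non-cyclic it has at least two distinct maximal cyclic subgroups $M, N$, each of order at least $p$; then $\text{d}(M,N) \geq p-1 \geq 4$. In (b) the two given subgroups have order at least $9$, so $a, b \geq 2$ and $\text{d}(M,N) \geq 2 \cdot 3^{1} = 6$. For the first alternative in (c) the two subgroups have order at least $8$, so $a, b \geq 3$ and $\text{d}(M,N) \geq 1 \cdot 2^{2} = 4$. The second alternative in (c) is handled separately: here $M, N$ have order at least $4$ and $M \cap N = \{e\}$, so $|M \setminus N| = |M| - 1 \geq 3$ and likewise for $N$, giving $\text{d}(M,N) \geq 3$ directly. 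In every case $\text{d}(M,N) \geq 3$, hence $\partial(G) \geq 3$.

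The computations are routine, so the only points needing care are structural rather than numerical. First, one must justify that a non-cyclic finite $p$-group has at least two maximal cyclic subgroups and that every maximal cyclic subgroup of a nontrivial $p$-group has order at least $p$; both are elementary consequences of finiteness, since each cyclic subgroup $\langle g \rangle$ is contained in some maximal cyclic subgroup, and a unique maximal cyclic subgroup would contain every element and force $G$ to be cyclic. Second, the trivial-intersection alternative in (c) is the tightest case: here the general estimate $(p-1)\,p^{\min\{a,b\}-1}$ would yield only $2$ when $p = 2$ and $a = b = 2$, so the hypothesis $M \cap N = \{e\}$ is essential to raise $|M \setminus N|$ from $|M| - 2$ up to $|M| - 1 \geq 3$. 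Keeping the two subcases of (c) distinct is the main thing to get right; everything else is a direct substitution into the uniform estimate.
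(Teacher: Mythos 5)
Your proposal is correct and takes essentially the same route as the paper: both reduce each case to the estimate $|M\setminus N|\geq \varphi(|M|)=p^{a-1}(p-1)$ for distinct maximal cyclic subgroups, and both treat the trivial-intersection alternative of (c) separately via $\min\{|M^*|,|N^*|\}\geq 3$. The only cosmetic difference is how the key inequality is justified --- the paper notes that all $\varphi(|M|)$ generators of $M$ lie in $M\setminus N$, while you bound $|M\cap N|\leq |M|/p$ using the subgroup lattice of a cyclic $p$-group --- which are trivially equivalent observations.
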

	
	\begin{proof}
		(i) Let $p>3$ and $G$ be non-cyclic. Then $G$ has at least two maximal cyclic subgroups $M$ and $N$.
		As $|M|\geq p$, we have $|\widetilde{M}| \geq \phi(p) \geq \phi(5) = 4 $. Similarly, $|\widetilde{N}| \geq = 4 $. 
		
		\noindent
		(ii) Next let $p=3$ and let $G$ have at least two maximal cyclic subgroups $M$ and $N$ of order greater than $3$. Then $|\widetilde{M}| \geq \phi(9) = 6 $ and $|\widetilde{N}| \geq \phi(9) = 6 $. 
		
		\noindent
		(iii) Now let $p=2$ and let $G$ have at least two maximal cyclic subgroups $M$ and $N$ of order greater than $4$. Then  $|\widetilde{M}| \geq \phi(8) = 4 $ and $|\widetilde{N}| > \phi(8) = 4 $. 
		
		In each of the above cases, $|M {\setminus} N| \geq |\widetilde{M}| > 3$ and $|N {\setminus} M| \geq |\widetilde{N}| > 3$. Hence $\partial(G) \geq \text{d}(M,N) > 3$.
		
		\noindent
		(iv) Finally, let $p=2$ and let $G$ have at least two maximal cyclic subgroups $M$ and $N$ of order greater than $2$ with trivial intersection. Then $|M|\geq 4$ and $|N|\geq 4$. Thus $\partial(G) \geq \text{d}(M,N) = \min\{|M {\setminus} N|, |N {\setminus} M|\} = \min\{|{M^*}|, |{N^*}|\} \geq 3$.
	\end{proof}
	
	We now prove Theorem \ref{mainthm}.
	
	\begin{proof}[Proof of Theorem \ref{mainthm}]
		By Lemma \ref{lemma1}, we have (ii) $\implies$ (i). Whereas, by Lemma \ref{lemma2}, (iii) $\implies$ (ii). So to complete the proof,  we need to prove that (i) $\implies$ (iii). 
		
		Suppose that $\mathcal{P}(G)$ is cyclically separable. Then $\mathcal{P}(G)$ is not complete, and so $G$ is non-cyclic. 
		Since $e$ is adjacent to every other vertex of $\mathcal{P}(G)$, $e \in S$ for any cyclic vertex cutset of $\mathcal{P}(G)$.

		First we assume that $p=3$. If all maximal cyclic subgroups of $G$ are of order $3$, then 
		\begin{align}
			\mathcal{P}(G) = \mathcal{P}(e) \vee \left\{ \mathcal{P}({M_1^*}) + \mathcal{P}({M_2^*}) + \dots + \mathcal{P}({M_r^*}) \right\},
		\end{align}
		where $r \geq 2$ and $M_1, M_2, \dots M_r$ are the maximal cyclic subgroups of $G$. 
		Note that (1)  holds because $M_i \cap M_j = \{e\}$ for all $1 \leq i,j \leq r$, $i \neq j$. 
		So $\mathcal{P}(G^*) \cong r K_2$ as $\mathcal{P}({M_i^*}) \cong K_2$ for all $1 \leq i \leq r$. Thus $\mathcal{P}(G^*)$ is disconnected and contains no cycle.
		
		On the other hand, if $G$ has only one maximal cyclic subgroup of order greater than $3$, say $M$, then  
		\begin{align}
			\mathcal{P}(G) = \mathcal{P}(e) \vee \left\{ \mathcal{P}({M^*}) + \mathcal{P}({M_1^*}) + \mathcal{P}({M_2^*}) + \dots + \mathcal{P}({M_s^*}) \right\},
		\end{align}
		where $M_1, M_2, \dots , M_s$ are the maximal cyclic subgroups of order $3$ in $G$. Observe that $M \cap M_i = \{e\}$ for all $1 \leq i \leq r$, and $M_i \cap M_j = \{e\}$ for all $1 \leq i,j \leq r$, $i \neq j$. We thus have $\mathcal{P}(G^*) \cong K_{|M|-1} + s K_2$ as $\mathcal{P}({M^*}) \cong K_{|M|-1}$ and $\mathcal{P}({M_i^*}) \cong K_2$ for all $1 \leq i \leq s$. So  $\mathcal{P}(G^*)$ is disconnected and has exactly one component $\mathcal{P}({M^*})$ containing a cycle. Additionally, since $M^*$ is a clique in $\mathcal{P}(G^*)$, no futher vertex deletion will produce two components containing cycles. Thus $\mathcal{P}(G)$ has no cyclic vertex cutset.
		
		So for the case $p=3$, if $G$ has at most one maximal cyclic subgroup of order greater than $3$, then it leads to the contradiction that $\mathcal{P}(G)$ is cyclically separable. Hence $G$ has at least two maximal cyclic subgroups of order greater than $3$.
		
		Next, we assume that $p=2$. Note that for any maximal cyclic subgroup $M$ of order $2$ in $G$, $\mathcal{P}({M^*}) \cong K_1$.
		So, by an argument similar to above, if $G$ has all maximal cyclic subgroups of order $2$, then  $\mathcal{P}(G^*) \cong r K_1$ for some integer $r \geq 2$. Then $\mathcal{P}(G^*)$ is disconnected and contains no cycle. Whereas, if $G$ has only one maximal cyclic subgroup of order greater than $2$, say $N$, then $\mathcal{P}(N^*) \cong K_{|N|-1}$ and so
		$\mathcal{P}(G^*) \cong K_{|N|-1} + s K_1$ for some positive integer $s$. Thus $\mathcal{P}(G^*)$ is disconnected, and since $|N| \geq 4$, $\mathcal{P}(G^*)$ has exactly one component $\mathcal{P}(N^*)$ containing a cycle. However, $N^*$ is a clique in $\mathcal{P}(G^*)$. 
		So no further vertex deletion in $\mathcal{P}(G^*)$ will result in two components containing cycles. Hence the above two subcases of $p=2$ contradict the fact that $\mathcal{P}(G)$ is cyclically separable. Consequently, $G$ has at least two maximal cyclic subgroups of order greater than $2$.

		Now, let $M_1, M_2,\ldots,M_r$ be the maximal cyclic subgroups of $G$ of order greater than $2$. Furthermore, suppose that $|M_i| = 4$ for all $2 \leq i \leq r$, and $M_i \cap M_j \neq \{e\}$ for all $1 \leq i, j \leq r$, $i \neq j$. Then for every $1 \leq i \leq r$, $M_1 \cap M_i$ is a cyclic subgroup of order $2$. Since $M_1$ has an unique subgroup of order $2$, say $\langle x \rangle$, we get 
		$$M_1 \cap M_2 = M_1 \cap M_3 = \dots = M_1 \cap M_r = \langle x \rangle.$$
		This implies that $\langle x \rangle \subset M_i$ for all $2 \leq i \leq r$. Additionally, as $ \mathcal{P}({M_i}) $ is a complete graph for all $1 \leq i \leq r$, $e$ and $x$ are adjacent to every other vertex in $\mathcal{P}({\cup_{i=1}^r M_i})$. Thus
		\begin{align}\label{eqB}
			\mathcal{P}({\cup_{i=1}^r M_i}) = \mathcal{P}({\langle x \rangle}) \vee \left\{ \mathcal{P}({M_1 \setminus \langle x \rangle}) + \mathcal{P}({M_2 \setminus \langle x \rangle}) + \dots + \mathcal{P}({M_r \setminus \langle x \rangle}) \right\}.
		\end{align}
		In fact, (\ref{eqB}) implies that
		$$\mathcal{P}({(\cup_{i=1}^r M_i) \setminus \langle x \rangle}) = \mathcal{P}({M_1 \setminus \langle x \rangle}) + \mathcal{P}({M_2 \setminus \langle x \rangle}) + \dots + \mathcal{P}({M_r \setminus \langle x \rangle}).$$ Since $\mathcal{P}({M_1 \setminus \langle x \rangle}) \cong K_{|M_1|-2}$ and $\mathcal{P}({M_i \setminus \langle x \rangle}) \cong K_2$ for all $2 \leq i \leq r$, we have
		\begin{align}\label{eqC}
			\mathcal{P}({(\cup_{i=1}^r M_i) \setminus \langle x \rangle}) \cong K_{|M_1|-2} + (r-1) K_2.
		\end{align}
		Thus $ \mathcal{P}({(\cup_{i=1}^r M_i) \setminus \langle x \rangle}) $ is disconnected and has at most one component containing a cycle. Moreover, all its components are induced by cliques.
		
		If $G$ has no maximal cyclic subgroup of order $2$, then $G = \cup_{i=1}^r M_i$. Hence by the above arguments, $\mathcal{P}(G)$ has no cyclic vertex cutset.
		Whereas, if $N_1, N_2,\ldots,N_s$ are the maximal cyclic subgroups of order $2$ in $G$, then 
		$$\mathcal{P}(G) = \mathcal{P}(e) \vee \left[ \mathcal{P}({(\cup_{i=1}^r M_i) \setminus \{e\}})  + \left\{ \mathcal{P}({N_1^*}) + \mathcal{P}({N_2^*}) + \dots + \mathcal{P}({N_s^*}) \right\} \right].$$
		This implies $\mathcal{P}(G^*) \cong \mathcal{P}({(\cup_{i=1}^r M_i) \setminus \{e\}})  + sK_1$, and so $ \mathcal{P}(G^*) $ is disconnected. Whereas, by (\ref{eqB}), $\mathcal{P}({(\cup_{i=1}^r M_i) \setminus \{e\}})$ is connected and it is the only component of $ \mathcal{P}(G^*) $ which contains cycles. Hence to obtain a cyclic vertex cutset of $\mathcal{P}(G)$, we must disconnect $ \mathcal{P}({(\cup_{i=1}^r M_i) \setminus \{e\}}) $, and
		to do so, we must delete the vertex $x$. Thus we conclude from (\ref{eqC}) and the subsequent argument that $\mathcal{P}(G)$ has no cyclic vertex cutset. This leads to the contradiction that $\mathcal{P}(G)$ is cyclically separable.
		
		Hence $G$ has at least two maximal cyclic subgroups of order greater than $4$, or that it has at least two maximal cyclic subgroups of order greater than $2$ with trivial intersection. Thus we have completed the proof of (i) $\implies$ (iii).
	\end{proof}
\break
	
	In the following, we recall some useful lemmas about finite $p$-groups and their power graphs.

	\begin{lemma}[\cite{robinson}]\label{pgroup}
		A finite $ p $-group has exactly one subgroup of order $ p $ if and only if it is
		cyclic or a generalized quaternion group.
	\end{lemma}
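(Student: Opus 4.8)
Since Lemma~\ref{pgroup} is a classical structural fact, I would give the standard self-contained argument rather than anything tailored to power graphs, proving the two implications separately and treating the harder (forward) implication by induction on $|G|$. For the easy direction, suppose $G$ is cyclic or generalized quaternion. A cyclic group of order $p^n$ has a unique subgroup of each order dividing $p^n$, in particular a unique subgroup of order $p$. A generalized quaternion group $Q_{2^n}=\langle a,b \mid a^{2^{n-1}}=e,\ b^2=a^{2^{n-2}},\ bab^{-1}=a^{-1}\rangle$ has $p=2$, and its only involution is the central element $a^{2^{n-2}}$, since every element of the form $a^i b$ squares to $b^2=a^{2^{n-2}}\ne e$; hence it too has a unique subgroup of order $2$.

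For the reverse direction I would assume $G$ has a unique subgroup of order $p$, say $P_0$, and induct on $|G|$. The observation that powers the induction is that every subgroup $H\le G$ inherits the hypothesis, because any order-$p$ subgroup of $H$ is an order-$p$ subgroup of $G$ and so equals $P_0$. This yields two facts: first, by the inductive hypothesis every proper subgroup of $G$ is cyclic or generalized quaternion; second, every abelian subgroup $A\le G$ is cyclic, because a finite abelian $p$-group $\mathbb{Z}_{p^{a_1}}\times\cdots\times\mathbb{Z}_{p^{a_k}}$ has exactly $(p^k-1)/(p-1)$ subgroups of order $p$, which equals $1$ only when $k=1$. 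In particular the (nontrivial) center $Z(G)$ is cyclic and contains $P_0$. If $G$ is abelian we are already done, so I would assume $G$ is non-abelian.

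I would then pass to a maximal subgroup $M$, which is normal of index $p$ and, by the first fact, cyclic or generalized quaternion. The substance lies in the case $M=\langle a\rangle$ cyclic of order $p^{m}$: choosing $b\in G\setminus M$, conjugation by $b$ is an automorphism of $\langle a\rangle$ of order dividing $p$, so $bab^{-1}=a^{r}$ with $r^{p}\equiv 1$ modulo $p^m$ and $r\not\equiv 1$ (as $G$ is non-abelian). Here the structure of $\mathrm{Aut}(\mathbb{Z}_{p^{m}})$ is decisive. For odd $p$ this group is cyclic, so $r\equiv 1+kp^{m-1}$; one can then adjust $b$ to make $b^{p}=e$, producing an element of order $p$ outside $\langle a\rangle$ and hence a second subgroup of order $p$, contradicting the uniqueness of $P_0$. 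Thus for odd $p$ the group $G$ must in fact be abelian, i.e.\ cyclic. For $p=2$ the group $\mathrm{Aut}(\mathbb{Z}_{2^{m}})$ has several involutions, and matching the possibilities for $b^{2}\in M$ against the requirement of a single involution forces the relations $b^2=a^{2^{m-1}}$ and $bab^{-1}=a^{-1}$, that is, $G\cong Q_{2^{m+1}}$. The remaining case, $M$ generalized quaternion (only possible when $p=2$), I would dispatch by checking directly that an index-$2$ overgroup of $Q_{2^{m}}$ possessing a unique involution is again generalized quaternion.

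The main obstacle is precisely this last paragraph: the element-order bookkeeping for the automorphism $a\mapsto a^{r}$ and, separately, the more delicate $p=2$ analysis that isolates the generalized quaternion relations. Everything preceding it — the easy direction, the reduction that every abelian subgroup is cyclic, and the descent to a maximal subgroup — is routine; the genuine content is in controlling how an element outside a cyclic maximal subgroup may act while the group retains a single subgroup of order $p$, and in seeing why odd $p$ and $p=2$ diverge at exactly the step where one tries to normalize $b^{p}$ to the identity.
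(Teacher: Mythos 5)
The paper does not prove this lemma at all: it is quoted as a classical fact with a citation to Robinson's \emph{A Course in the Theory of Groups} (where it appears as Theorem 5.3.6), and nothing in the paper's arguments depends on how it is proved. So the relevant comparison is between your sketch and the standard textbook treatment, and on that score your skeleton is the right one and essentially sound: the easy direction is handled correctly (the unique involution of $Q_{2^n}$ is $a^{2^{n-2}}$ because every element $a^ib$ squares to it), the observation that the hypothesis passes to subgroups and forces all abelian subgroups to be cyclic is the correct engine, and the $\operatorname{Aut}(\mathbb{Z}_{p^m})$ analysis for a cyclic maximal subgroup is the standard computation. Your asserted normalizations do check out: for odd $p$, writing $b^p=a^s$ and using $bab^{-1}=a^{1+kp^{m-1}}$ one gets $(a^ib)^p=a^{ip+s}$ with $p\mid s$, so a suitable $i$ yields an element of order $p$ outside $\langle a\rangle$; for $p=2$ the three involutions $r\equiv -1,\,2^{m-1}\pm 1$ in $\operatorname{Aut}(\mathbb{Z}_{2^m})$ all produce a second involution except $r=-1$ with $b^2=a^{2^{m-1}}$, which is exactly $Q_{2^{m+1}}$. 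The one place where your sketch hides genuinely nontrivial work — as you yourself flag — is the case of a generalized quaternion maximal subgroup $M$: ``checking directly'' is true but not a one-liner (one route: for $x\in G\setminus M$, $x$ normalizes the characteristic cyclic subgroup $\langle a\rangle\le M$, and an analysis of $\langle a,x\rangle$ and of $C_G(a)$, using the induction hypothesis and the fact that $Q$-groups have center of order $2$, shows $G$ must contain a cyclic subgroup of index $2$, reducing to the case already done). The standard texts often avoid this case split altogether by working instead with a maximal normal abelian subgroup $A$, which is cyclic and self-centralizing, so that $G/A$ embeds in $\operatorname{Aut}(A)$ and one argues directly that $[G:A]\le 2$; if you intend to write the proof out in full, that variant would spare you the fiddliest paragraph.
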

	
	\begin{lemma}[\cite{moghaddamfar2014}]\label{lemmapgroup}
		For any finite $p$-group $G$, $\mathcal{P}(G^*) $ is
		connected if and only if $ G $ is either cyclic or generalized quaternion.
	\end{lemma}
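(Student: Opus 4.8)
The plan is to connect the connectivity of $\mathcal{P}(G^*)$ to the number of subgroups of order $p$ in $G$, and then invoke Lemma \ref{pgroup}. The central observation is that every nontrivial cyclic $p$-group has a \emph{unique} subgroup of order $p$, so I can define a map $\rho \colon G^* \to \{\text{subgroups of order } p\}$ sending $y$ to the unique order-$p$ subgroup of $\langle y \rangle$. I claim $\rho$ is constant on each connected component of $\mathcal{P}(G^*)$. Indeed, if $x$ and $y$ are adjacent, then (say) $y \in \langle x \rangle$, so $\langle y \rangle \subseteq \langle x \rangle$; since both are nontrivial cyclic $p$-groups, the unique order-$p$ subgroup of $\langle y \rangle$ lies inside $\langle x \rangle$ and must coincide with the unique order-$p$ subgroup of $\langle x \rangle$, giving $\rho(x) = \rho(y)$. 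Propagating this along edges shows $\rho$ is constant on each component.

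Since each order-$p$ subgroup $\langle a \rangle$ is realized as $\rho(a)$, the map $\rho$ is surjective onto the set of order-$p$ subgroups. As $\rho$ takes a single value on each component, the number of components of $\mathcal{P}(G^*)$ is at least the number of distinct subgroups of order $p$. Hence if $\mathcal{P}(G^*)$ is connected, $G$ has exactly one subgroup of order $p$ (there is always at least one, since a nontrivial $p$-group contains an element whose suitable power has order $p$), and Lemma \ref{pgroup} forces $G$ to be cyclic or generalized quaternion.

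For the converse, suppose $G$ is cyclic or generalized quaternion, so by Lemma \ref{pgroup} it has a unique subgroup of order $p$, say $\langle z \rangle$ with $z$ of order $p$. For every $y \in G^*$, the subgroup $\langle y \rangle$ is a nontrivial cyclic $p$-group whose unique order-$p$ subgroup must be $\langle z \rangle$, the only one available in $G$; thus $z \in \langle y \rangle$, so $z$ is a power of $y$ and is adjacent to $y$ in $\mathcal{P}(G^*)$. Therefore $z$ is adjacent to every other vertex, and $\mathcal{P}(G^*)$ is connected. Alternatively, in the cyclic case one may simply cite Lemma \ref{lemma} to get that $\mathcal{P}(G)$, and hence $\mathcal{P}(G^*)$, is complete.

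The argument is essentially self-contained once Lemma \ref{pgroup} is available, so I do not anticipate a serious obstacle; the only point requiring care is verifying that $\rho$ is well defined and locally constant, i.e.\ that adjacency in the power graph forces a containment of cyclic subgroups and hence agreement of their bottom order-$p$ layers. The main conceptual step --- recognizing that the order-$p$ subgroup beneath each element is a component invariant --- is what reduces the whole statement to counting minimal subgroups.
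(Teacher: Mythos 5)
Your proof is correct: the map $\rho$ is well defined because every nontrivial cyclic $p$-group has a unique subgroup of order $p$, it is indeed constant on components (adjacency forces containment of the cyclic subgroups, hence agreement of their order-$p$ subgroups), and surjectivity plus Lemma \ref{pgroup} gives the forward direction, while the converse correctly exhibits a generator $z$ of the unique order-$p$ subgroup as a vertex adjacent to all of $G^*$. The paper itself does not prove this lemma but cites it from the literature, and your argument is essentially the standard one there --- note it is also consistent with Lemma \ref{lemmapgroup2}, since your invariant $\rho$ shows the order-$p$ elements in any component form exactly one subgroup minus the identity.
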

	
	\begin{lemma}[\cite{panda2018a}]\label{lemmapgroup2}
		If $ G $ is a finite $ p $-group, then each component of $\mathcal{P}(G^*) $ has exactly $ p - 1 $ elements of order $ p $.
	\end{lemma}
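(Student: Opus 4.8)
The plan is to understand the connected components of $\mathcal{P}(G^*)$ completely by attaching to each non-identity element a single subgroup of order $p$ and then showing that this assignment is constant on components. For a non-identity element $g$ of the $p$-group $G$, the cyclic subgroup $\langle g \rangle$ has order $p^k$ for some $k \geq 1$; being cyclic of prime power order, it contains a unique subgroup of order $p$, namely $\langle g^{p^{k-1}} \rangle$. Write $B(g)$ for this subgroup. My goal is to prove that two non-identity elements lie in the same component of $\mathcal{P}(G^*)$ if and only if $B(u) = B(v)$; the count of elements of order $p$ then falls out immediately.

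First I would show that $B$ is constant along each edge of $\mathcal{P}(G^*)$. If $u$ and $v$ are adjacent, then one is a power of the other, say $u \in \langle v \rangle$, so $\langle u \rangle \subseteq \langle v \rangle$. Now $B(u)$ is a subgroup of order $p$ sitting inside $\langle v \rangle$; but $\langle v \rangle$ is cyclic of prime power order and hence has only one subgroup of order $p$, which is $B(v)$. Therefore $B(u) = B(v)$. Since $B$ does not change across any edge, it is constant on each connected component of $\mathcal{P}(G^*)$. For the converse, if $B(u) = B(v) = \langle z \rangle$ with $z$ of order $p$, then $z$ is a power of $u$ and also a power of $v$, so $u$ and $z$ are adjacent and $v$ and $z$ are adjacent; the path $u \text{--} z \text{--} v$ places $u$ and $v$ in the same component. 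Thus the components of $\mathcal{P}(G^*)$ are precisely the nonempty fibers of the map $B$.

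With this in hand, the count is the easy part. Fix a component $C$ and let $\langle z \rangle$ be the common value of $B$ on $C$, with $z$ of order $p$. An element $g$ has order $p$ exactly when $\langle g \rangle$ itself has order $p$, in which case $B(g) = \langle g \rangle$. Hence $g$ lies in $C$ and has order $p$ if and only if $\langle g \rangle = \langle z \rangle$, that is, $g \in \langle z \rangle^*$, and there are exactly $p - 1$ such elements. Conversely, these $p-1$ elements all satisfy $B(g) = \langle z \rangle$ and so all lie in $C$, confirming that $C$ contains exactly $p-1$ elements of order $p$.

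The only real work is the edge-invariance of $B$, and its entire force comes from the fact that a cyclic group of prime power order has a unique subgroup of order $p$; once that is isolated, both the ``constant on components'' direction and the reconnection via the common order-$p$ element are routine. I expect the main point to watch is simply to confirm that every component receives a well-defined value of $B$, which is automatic since every non-identity element determines some $B(g)$, so that no component is accidentally left without elements of order $p$.
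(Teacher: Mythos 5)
Your proof is correct: the key observation that the unique subgroup of order $p$ in $\langle g \rangle$ is invariant along edges of $\mathcal{P}(G^*)$, so that the components are exactly the fibers of the map $g \mapsto B(g)$, settles the count immediately. The paper itself states this lemma without proof, citing \cite{panda2018a}, and your argument is essentially the standard one used there, so nothing further is needed.
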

	
	We next proceed to prove Theorem \ref{mainthm2}.
	
	\begin{proof}[Proof of Theorem \ref{mainthm2}]
		Let the finite $p$-group $G$ be such that $\kappa(\mathcal{P}(G)) = c\kappa(\mathcal{P}(G))$. Since $ \mathcal{P}(G) $ is a finite graph, $ \kappa(\mathcal{P}(G)) $ is finite. So $c\kappa(\mathcal{P}(G))$ is finite as well. Thus $\mathcal{P}(G)$ is cyclically separable. Then $\mathcal{P}(G)$ is not complete, and so $G$ is non-cyclic.
		
		Consider the case $p \in \{2,3\}$. Then by Theorem \ref{mainthm}, one of the following holds:
		\begin{enumerate}[\rm(a)]
			\item $p=3$ and $G$ has at least two maximal cyclic subgroups of order greater than $3$,
			\item $p=2$ and $G$ has at least two maximal cyclic subgroups of order greater than $4$, or that $G$ has at least two maximal cyclic subgroups of order greater than $2$ with trivial intersection.
		\end{enumerate}
		
		If possible, let $G$ be generalized quaternion. Then $G$ is a $2$-group with the presentation
		$$G = \langle a,b : a^{2n} = e, a^n = b^2, ab = ba^{-1} \rangle,$$
		where $n$ is some power of $2$. Note that $\langle a \rangle$ is a maximal cyclic subgroup of $G$ of order $2n$. Whereas, for $0 \leq k \leq n-1$,  $\langle a^kb \rangle = \{e, a^kb, a^n, a^{n+k}b\}$ is a maximal cyclic subgroup of $G$ of order $4$. In fact, these are the only maximal cyclic subgroup of $G$. Then $G$ has at most one maximal cyclic subgroup of order greater than $4$. Moreover, the intersection of every pair of maximal cyclic subgroups of $G$ is $ \{e, a^n\} $, which is non-trivial. As these lead to contradiction, $G$ is not generalized quaternion. 
		
		Now that $G$ is neither cyclic nor generalized quaternion, by Lemma \ref{lemmapgroup}, $\mathcal{P}(G^*) $  is disconnected. From this and our initial assumption,  $\kappa(\mathcal{P}(G)) = c\kappa(\mathcal{P}(G)) = 1$.
		Thus $\{e\}$ is a cyclic vertex cutset of  $\mathcal{P}(G)$. 
		So at least two components of $\mathcal{P}(G^*) $ contain cycles. Since $p \in \{2,3\}$, this implies that at least two maximal cyclic subgroups of $G$ are of order greater than $p$. For $r \geq 2$, let $M_1,M_2,\dots,M_r$ be the maximal cyclic subgroups of $G$ of order greater than $p$.  If possible, suppose that $M_i \cap M_j$ is non-trivial for all $1 \leq i, j \leq r$, $i \neq j$.

		Let  $H_1$ be the subgroup of order $p$ in $M_1$.
		For $2 \leq j \leq r$, as $M_1 \cap M_j$ is non-trivial $p$-group, it has a subgroup $H_j$ of order $p$. Then $H_j$ is a subgroup of order $p$ in $M_1$ for all $2 \leq j \leq r$. However, since $M_1$ is a cyclic group, it contains an unique subgroup of order $p$. This implies that $H_1 = H_j$ for all $2 \leq j \leq r$. Then $H_1 \subseteq M_1 \cap M_j \subseteq M_j$ for all $2 \leq j \leq r$. Thus we conclude that $H_1 \subseteq \cap_{i=1}^r M_i$. Let $x \in H_1^*$. Let $y \in M_i\setminus \{x\}$ for some $1 \leq i \leq r$. Then  there exists a positive integer $m$ such that $y^{p^m}$ is of order $p$. As $x$ is of order $p$ as well, $x = y^{lp^m}$ for some integer $1\leq l < p$ . Then $y$ is adjacent to $x$. Since this holds for every $1 \leq i \leq r$, $\mathcal{P}({\cup_{i=1}^r M_i^*})$ is connected. However, we have shown that $\mathcal{P}(G^*) $ is disconnected. Thus $G \neq \cup_{i=1}^r M_i$. So $G$ has at least one maximal cyclic subgroup of order $p$. Let $N_1,N_2,\dots, N_s$ be the maximal cyclic subgroups of $G$ of order $p$. Then $N_i \cap N_j$ is trivial for all $1 \leq i, j \leq r$, $i \neq j$, and so
		$$\mathcal{P}(G^*) = \mathcal{P}({\cup_{i=1}^r M_i^*}) + \mathcal{P}({N_1^*}) + \mathcal{P}({N_2^*}) + \dots + \mathcal{P}({N_s^*}).$$
		For $1 \leq i \leq r$, as $ \mathcal{P}({N_i^*}) \cong K_{1}$ or $ \mathcal{P}({N_i^*}) \cong K_{2}$, $ \mathcal{P}({\cup_{i=1}^r M_i^*}) $ is the only component of $\mathcal{P}(G^*)$ that contains cycles. This leads to a contradiction. Therefore, $\mathcal{P}(G)$ has at least two maximal cyclic subgroups of order greater than $p$ with trivial intersection.
		
		To prove the converse, we first suppose that $p > 3$ and $G$ is a non-cyclic $p$-group. Then $G$ is neither cyclic nor generalized quaternion. So by Lemma \ref{pgroup}, $G$ has at least two  subgroups $K_1$ and $K_2$ of order $p$. Then $G$ has two maximal cyclic subgroups $M_1$ and $M_2$ such that $K_1 \subseteq M_1$ and $K_2 \subseteq M_2$. If $M_1 \cap M_2$ is non-trivial, then it contains a subgroup $K$ of order $p$. Thus $K = K_1 = K_2$, which is a contradiction. So $M_1 \cap M_2$ is trivial. 
		
		Next, suppose $p \in \{2,3\}$ and that $G$ has at least two maximal cyclic subgroups of order greater than $p$ with trivial intersection. By combining this with the case $p > 3$, $G$ has at least two maximal cyclic subgroups $M_1$ and $M_2$ such that $M_1 \cap M_2$ is trivial, and $|M_1| \geq 4$ and $|M_2|\geq 4$. Moreover, both $M_1$ and $M_2$ have $p-1$ elements of order $p$. Hence, we conclude from Lemma \ref{lemmapgroup2}, $M_1^*$ and $M_2^*$ are in different components of $\mathcal{P}(G^*)$. In particular, $\mathcal{P}(G^*)$ is disconnected. Also $ \mathcal{P}({M_1^*}) \cong K_{|M_1|-1}$ and $ \mathcal{P}({M_2^*}) \cong K_{|M_2|-1}$. Hence $\{e\}$ is a cyclic vertex cutset of $\mathcal{P}(G)$. Therefore, $\kappa(\mathcal{P}(G)) = c\kappa(\mathcal{P}(G)) = 1$.
	\end{proof}


\begin{thebibliography}{99}
		
		\footnotesize
		
		\bibitem{aalipour}
		G. Aalipour, S. Akbari, P. J. Cameron, R. Nikandish, and F. Shaveisi. On the structure of the power
		graph and the enhanced power graph of a group. \emph{Electron. J. Combin.} 24(3): Paper 3.16, 18 pp, 2017.
		
		\bibitem{AKC}
		J. Abawajy, A. Kelarev, and M. Chowdhury.
		\newblock Power graphs: a survey.
		\newblock {\em Electron. J. Graph Theory Appl.} 1(2):125--147, 2013
		
		\bibitem{bertram}
		E.A. Bertram.
		\newblock Some applications of graph theory to finite groups.
		\newblock \emph{Discrete Math.}, {44}(1):31--43, 1983.
		
		\bibitem{birkhoff1913}
		G. D.~Birkhoff.
		\newblock The reducibility of maps.
		\newblock {\em Amer. J. Math.}, 35:115--128, 1913.		
		
		
		\bibitem{CGS-2009}
		I.~Chakrabarty, S.~Ghosh, and M.~K. Sen.
		\newblock Undirected power graphs of semigroups.
		\newblock {\em Semigroup Forum}, 78(3):410--426, 2009.
		
		
		\bibitem{chattopadhyay2021}
		S. Chattopadhyay, K. L. Patra,	and B. K. Sahoo.
		\newblock Minimal cut-sets in the power graphs of certain finite non-cyclic groups. 
		\newblock \emph{Commun. Algebra} 49(3):1195--1211, 2021.
		
		\bibitem{chattopadhyay2019}
		S. Chattopadhyay, K. L. Patra,	and B. K. Sahoo.
		\newblock Vertex connectivity of the power graph of a finite cyclic group.
		\newblock \emph{Discrete Appl. Math.}, 266:259--271, 2019.
		
		\bibitem{Chen}
		R. Chen and X. Zhao. On Conjugacy Class Graph of Normal Subgroup. \emph{Algebra Colloq.}, 29(3):437--442, 2022.
		
		\bibitem{cur-2016}
		B.~Curtin and G.~R. Pourgholi.
		\newblock An Euler totient sum inequality.
		\newblock {\em J. Number Theory}, 163:101--113, 2016.
		
		\bibitem{kel-2000}
		A.~V. Kelarev and S.~J. Quinn.
		\newblock A combinatorial property and power graphs of groups.
		\newblock In {\em Contributions to General Algebra} 12 (Vienna, 1999), pp. 229--235. Heyn, Klagenfurt, 2000.
		
		\bibitem{kel-2002}
		A.~V. Kelarev and S.~J. Quinn.
		\newblock Directed graphs and combinatorial properties of semigroups.
		\newblock {\em J. Algebra}, 251(1):16--26, 2002.
		
		\bibitem{Klotz}
		W. Klotz and T. Sander. Integral Cayley Graphs over Abelian Groups. \emph{Electron. J. Combin.}, 17:R81, 2010.
		
		\bibitem{latifi1994}
		S. Latifi, M. Hegde, and M. Naraghi-Pour.
		\newblock Conditional connectivity measures for large multiprocessor systems.
		\newblock {\em IEEE Trans. Comput.}, 43(2):218--222, 1994.
		
		
		\bibitem{liu2011}
		Q. Liu, Z. Zhang, and Z. Yu.
		\newblock Cyclic connectivity of star graph.
		\newblock \emph{Discrete Math. Algorithms Appl.} 3(4):433--442, 2011.
		
		\bibitem{liu2022}
		X. Liu, S. Zhou, and H. Zhang.
		\newblock Cyclic Vertex (Edge) Connectivity of Burnt Pancake Graphs.
		\newblock \emph{Parallel Process. Lett.} 32(3-4):2250006, 15 pp., 2022.
		
		
		\bibitem{xuanlong2015}
		X. Ma and M. Feng. 
		\newblock On the chromatic number of the power graph of a finite group. 
		\newblock \emph{Indag. Math. (N.S.)} 26(4):626--633, 2015. 
		
		\bibitem{ma2022survey}
		X. Ma, A. Kelarev, Y. Lin, and K. Wang
		\newblock A survey on enhanced power graphs of finite groups
		\newblock {\em Electron. J. Graph Theory Appl.} 10(1):89--111, 2022.
		
		
		\bibitem{moghaddamfar2014}
		A. R. Moghaddamfar, S. Rahbariyan, and 	W. J. Shi.
		\newblock Certain properties of the power graph associated with a finite group.
		\newblock \emph{J. Algebra Appl.}, 13(7):1450040, 18 pp., 2014.
		
		\bibitem{nedela2022}
		R. Nedela and M. \v{S}koviera.
		\newblock Cyclic connectivity, edge-elimination, and the twisted Isaacs graphs.
		\newblock {\em J. Combin. Theory Ser. B}, 155:17--44, 2022.    
		
		\bibitem{parveen}
		Parveen,  S. Dalal, and J. Kumar.
		\newblock Enhanced power graphs of certain non-abelian groups.
		\newblock \emph{Discrete Math. Algorithms Appl.} 16(05):2350063, 2024.
		
		\bibitem{panda2020}
		R.~P. Panda.
		\newblock A combinatorial characterization of finite groups of prime exponent.
		\newblock \emph{Indag. Math. (N.S.)}, 31(1):1–6, 2020. 
		
		\bibitem{panda2018a}
		R. P. Panda and K. V. Krishna. 
		\newblock On connectedness of power graphs of finite groups.
		\newblock \emph{J. Algebra Appl.}, 17(10):1850184, 20pp, 2018.
		
		\bibitem{panda2018b}
		R.~P. Panda and K.V. Krishna. 
		\newblock On minimum degree, edge-connectivity and connectivity of power graphs of finite groups. 
		\newblock \emph{Comm. Algebra}, 46(7):3182--3197, 2018.
		
		\bibitem{PPS-cyclic}
		R.~P. Panda, K.~L. Patra, and B.~K. Sahoo.
		\newblock On the minimum degree of the power graph. of a finite cyclic group   
		\newblock \emph{J. Algebra Appl.} 20(3):2150044, 18 pp, 2021.
		
		
		\bibitem{panda2024}
		R.~P. Panda, K.~L. Patra, and B.~K. Sahoo.
		\newblock Characterizing finite nilpotent groups associated with a graph theoretic equality.
		\newblock \emph{Ric. Math.}, 73(2):1017--1027, 2024.
		
		\bibitem{panda2023}
		R.~P. Panda, K.~L. Patra, and B.~K. Sahoo.
		\newblock On the minimum degree of power graphs of finite nilpotent groups.
		\newblock \emph{Comm. Algebra}, 51(1):314--329, 2023.
		
		\bibitem{rather}
		B. A. Rather, S. Pirzada, T. A. Chishti, and A. M. Alghamdi.
		\newblock On normalized Laplacian eigenvalues of power graphs associated to finite cyclic groups.
		\newblock \emph{Discrete Math. Algorithms Appl.}, 15(2):2250070, 23pp, 2023.
		
		
		\bibitem{robertson1984}
		N. Robertson.
		\newblock Minimal cyclic-connected graphs.
		\newblock \emph{Trans. Amer. Math. Soc.}, 284:665--687, 1984.
		
		\bibitem{robinson}
		D. J. Robinson. \emph{A Course in the Theory of Groups}. Vol. 80. Springer Science \& Business Media, 2012.
		
		\bibitem{tait1880}
		P.~G. Tait.
		\newblock Remarks on the coloring of maps.
		\newblock {\em Proc. Roy. Soc. Edinburg}, 10:501--503, 1880.
		
		\bibitem{zahirovic2022}
		S. Zahirovi\'c.
		\newblock The power graph of a torsion-free group of nilpotency class 2.
		\newblock \emph{J. Algebraic Combin.}, 55:715--727, 2022.
		
		
		\bibitem{zhang1997}
		C.~Q. Zhang.
		\newblock \emph{Integer flows and cycle covers of graphs}. 
		\newblock Marcel Dekker Inc., New York, 1997.
		
	\end{thebibliography}
\end{document}